\numberwithin{equation}{section}
\newcounter{examplecounter}
\newmdenv[ 
  topline=false,
  bottomline=false,
  rightline=false,
  leftline = false,
  skipabove=\topsep,
  skipbelow=\topsep
]{leftrule}
\NewDocumentEnvironment{example}{O{\textbf{Example~\arabic{examplecounter}\refstepcounter{examplecounter}:~}}} 
{\begin{leftrule}\noindent\textcolor{black}{#1}} {\end{leftrule}}
\newtheorem{thm}{Theorem}
\newtheorem{defn}{Definition}
\newtheorem{lem}{Lemma}
\newtheorem{eg}{Example}
\def\ni{\noindent}
\DeclareMathOperator*{\argmin}{arg\,min}
\newcommand*{\textcal}[1]{%
  \textit{\fontfamily{qzc}\selectfont#1}%
\modulolinenumbers[5]
}
\journal{TBA}
\begin{document}

\begin{frontmatter}

\title{On Harmonic Ritz vectors and the stagnation of  GMRES}

\author{Mashetti Ravibabu\fnref{myfootnote}}
\address{Indian Institute of Science, Bengaluru, India}


\cortext[mycorrespondingauthor]{Mashetti Ravibabu}
\ead{mashettiravibabu2@gmail.com}


\begin{abstract}
This paper derives a necessary and sufficient condition for the coincidence of Harmonic residual vectors and the residual vector in GMRES. 
The properties of the harmonic Ritz values at the stagnation of GMRES were described in the Proposition-4.2 of \cite{gen}. 
Necessary and sufficient conditions basing on Harmonic Ritz vectors for the stagnation have derived in this paper.
\end{abstract}

\begin{keyword}
GMRES, Harmonic Ritz vectors, Stagnation
\end{keyword}

\end{frontmatter}
\section{Introduction}\label{intro}
The GMRES method is widely used for approximating the solution of 
sparse  nonsymmetric linear system of equations; see \cite{saad}. GMRES arbitrarily chooses an initial residual vector $r_0,$ and then at each iteration updates a residual vector using the Krylov subspace that based on $r_0$ and the coefficient matrix $A.$ Thus a residual vector can be written as the polynomial in $A,$ so-called the \emph{residual polynomial} acting on $r_0.$ 

The zeroes of the residual polynomial are the Harmonic Ritz values \cite{goo}. Harmonic Ritz values are approximations to eigenvalues of $A$ from the  Krylov subspace in GMRES. The corresponding residual vectors, so-called \emph{Harmonic Residual vectors} are orthogonal to the $A$ image of the Krylov subspace in GMRES. 


If a residual vector of the linear system remains the same for a few consecutive iterations of GMRES then it is called the stagnation. The relations between Harmonic Ritz values in any two successive iterations during the  stagnation
are given in the Proposition-4.2 of \cite{gen}.
This paper shows that the coincidence of GMRES residual vector and harmonic residual vectors is theoretically possible. Then, it derives a necessary and sufficient condition for the stagnation of GMRES based on the Harmonic Ritz vectors. It also discusses the preserve of harmonic Ritz vectors during the stagnation phase of GMRES.

The paper is organized as follows: Section 2 introduces the GMRES method.
In Section 3 a least squares problem is devised that connects solutions of least squares problems in any two successive iterations of GMRES. Using the results of Section-3, Section 4 derives a necessary and sufficient condition for the coincidence of the residual and Harmonic residual vectors. Then Section 5  derives a necessary and sufficient condition for the stagnation of GMRES. Section 6 concludes the paper.

\section{GMRES}\label{gmres}
Consider the following system of linear equations:
$$Ax =b, A \in \textcal{C}^{~n \times n},~ b\in\textcal{C}^{~n}, x \in \textcal{C}^{~n}.$$
Let $x_0 \in \textcal{C}^{~n}$ be an arbitrarily chosen initial
approximation to the solution of the above problem. Without loss of
generality, we assumed throughout the paper that $x_0=0$ so that the initial residual vector is $r_0 =b.$ Then, at the $i^{th}$ iteration of GMRES an approximate
solution belongs to the Krylov subspace:
$$\textcal{K}_i(A,b) ~=~span\{b,Ab,\cdots, A^{i-1} b\}$$
and is of minimal residual norm:
\begin{equation}\label{eq1}\relax
\|r_i\| = \min_{x \in \textcal{K}_i(A,b)} \|b-Ax\|.
\end{equation}
GMRES solves this minimization problem using the following Arnoldi
recurrence relation:
\begin{equation}\label{eq1a}\relax
AV_i=V_iH_i+h_{i+1,i}v_{i+1}e_i^\ast, ~~\mbox{where}~~v_1 =
\frac{b}{\|b\|},
\end{equation}  
where the matrix $V_i=\begin{bmatrix} v_1~ v_2~ \cdots ~v_i
\end{bmatrix},$ and $H_i$ is an unreduced upper Hessenberg matrix of order
$i.$ The vectors $\{v_1,v_2,\cdots,v_j\}$ form an orthonormal basis for $\textcal{K}_j(A,b),$ for $j=1,2,\ldots, i.$ A vector $v_{i+1}$ is of unit norm, and is orthogonal to $v_j$ for $j \leq i$

GMRES uses the equation (\ref{eq1a}) to recast the least squares problem in (\ref{eq1}) into the following:
\begin{equation}\label{eq1b}\relax
z_i = \argmin_{x \in \textcal{C}^i} \|b-AV_ix\|= \arg\min_{x \in \mathcal{C}^i} \|\beta V_{i+1}e_1-V_{i+1}\tilde{H}_i x\|,
\end{equation}
where $\beta = \|b\|,$ and $\tilde{H_i}$ is an upper Hessenberg matrix 
obtained by appending the row $[0~0~\cdots~h_{i+1,i}]$ at the bottom of the matrix $H_i.$ As columns of the matrix $V_{i+1}$ are orthonormal, the above least squares problem is equivalent to the following problem:
$$z_i =\arg\min_{x \in \mathcal{C}^i} \|\beta e_1-\tilde{H_i} x\|.$$
GMRES solves this problem for the vector $z_i$ by using the $QR$ decomposition of the matrix $\tilde{H_i}.$ From the equation (\ref{eq1b}) note that a vector $z_i$ satisfies the following  normal system of
equations:
\begin{equation}\label{eq2}\relax
V_i^\ast A^\ast AV_iz_i = V_i^\ast A^\ast b = \beta V_i^\ast A^\ast
V_ie_1.
\end{equation}
Since the columns of $V_i$ are orthonormal, and $v_{i+1}$ is orthogonal to the columns of $V_i,$ by using the equation (\ref{eq1a}), the above equation 
can be rewritten as follows in the terms of $H_i:$ 
\begin{equation}\label{eq3}\relax
(H_i^\ast H_i+|h_{i+1,i}|^2 e_ie_i^\ast)z_i = \beta H_i^\ast e_1.
\end{equation}
From the equation ~(\ref{eq1}) note that the norm of a residual vector associated with $V_iz_i$ is smaller over the Krylov subspace of dimension $i.$ 
\section{The new Least Squares problem}
In this section, we devise a least squares problem that connects approximate solutions at two successive iterations of GMRES.
Throughout this section, iteration number is fixed at $m,$ and
$`y`$ denotes the solution of a least squares problem in the 
equation~(\ref{eq2}), for $i=m.$ 

Consider the following least squares problem:
$$z = \argmin_{x \in \textcal{C}^m} \|b-AV_m(I-e_me_m^\ast)x\|.$$
Note that a solution vector $z$ of this least squares problem satisfies
the following system of normal equations:
\begin{equation}\label{eq5}\relax
(I-e_m e_m^\ast)V_m^\ast A^\ast (b-AV_m(I-e_me_m^\ast)z)=0.
\end{equation}
As linear span of a vector $e_m$ is the null space of a projection operator $(I-e_me_m^\ast),$ it gives the following:
$$V_m^\ast A^\ast (b-AV_m(I-e_me_m^\ast)z) = Ke_m,$$
where $K$ is a scalar that can be obtained by applying an inner product with a vector $e_m$ on both the sides of the above equation. The next theorem makes a connection between solutions of a new least squares problem and the usual least squares problem of GMRES for the vector space spanned by columns of $V_{m-1}:= \begin{bmatrix} v_1~ v_2~ \cdots ~v_{m-1}
\end{bmatrix}.$ 
\begin{thm}\label{thm1}\relax
Let a vector $z$ be the same as in the equation (\ref{eq5}), and $z_{m-1}$ be a vector of length $m-1$ 
such that $V_m (I-e_me_m^\ast)z = V_{m-1}z_{m-1}.$ Then 
\begin{equation}\label{V1}\relax
z_{m-1}= \arg\min_{x \in \mathcal{C}^{m-1}}\|b-AV_{m-1}x\|^2
\end{equation}
\end{thm}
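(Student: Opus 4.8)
The plan is to read the claim directly off the normal equations already in hand. Starting from~(\ref{eq5}), the minimizer $z$ satisfies $(I-e_me_m^\ast)V_m^\ast A^\ast(b-AV_m(I-e_me_m^\ast)z)=0$, and, as noted right after that equation, this is equivalent to
$$V_m^\ast A^\ast\bigl(b-AV_m(I-e_me_m^\ast)z\bigr)=Ke_m$$
for some scalar $K$. First I would substitute the hypothesis $V_m(I-e_me_m^\ast)z=V_{m-1}z_{m-1}$ into the residual, so that this identity becomes $V_m^\ast A^\ast(b-AV_{m-1}z_{m-1})=Ke_m$.

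Next I would split $V_m=[\,V_{m-1}\ \ v_m\,]$ and compare the two sides block by block. The bottom entry, $v_m^\ast A^\ast(b-AV_{m-1}z_{m-1})=K$, merely identifies the scalar $K$ and plays no further role; the top $m-1$ entries give
$$V_{m-1}^\ast A^\ast\bigl(b-AV_{m-1}z_{m-1}\bigr)=0,$$
which is exactly the normal system~(\ref{eq2}) for the least squares problem $\min_{x\in\mathbb{C}^{m-1}}\|b-AV_{m-1}x\|$ at iteration $i=m-1$. Since $AV_{m-1}$ has full column rank (because $A$ is nonsingular and the columns of $V_{m-1}$ are orthonormal), that least squares problem has a unique solution, which must therefore be $z_{m-1}$; this is~(\ref{V1}).

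I do not expect a serious obstacle; the one point needing care is that the argmin defining $z$ is \emph{not} unique, since the objective in the least squares problem preceding~(\ref{eq5}) depends on $x$ only through $(I-e_me_m^\ast)x$, so the last coordinate of $z$ is free. Accordingly the theorem is (correctly) stated not for $z$ itself but for the well-defined vector $V_m(I-e_me_m^\ast)z$, and one should note in passing that then $z_{m-1}=V_{m-1}^\ast V_m(I-e_me_m^\ast)z$ is forced --- it is just the first $m-1$ components of $z$.

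A shortcut avoiding the normal equations is also available: writing $Px$ for the vector of the first $m-1$ components of $x$, one has $V_m(I-e_me_m^\ast)x=V_{m-1}(Px)$, and $Px$ sweeps out all of $\mathbb{C}^{m-1}$ as $x$ ranges over $\mathbb{C}^m$; hence $\min_{x\in\mathbb{C}^m}\|b-AV_m(I-e_me_m^\ast)x\|=\min_{x'\in\mathbb{C}^{m-1}}\|b-AV_{m-1}x'\|$, and the minimizing element of $\mathcal{K}_{m-1}(A,b)$ is the same on both sides, giving $z_{m-1}=Pz$ at once. I would present the normal-equations version as the main argument, since it reuses the machinery built in this section.
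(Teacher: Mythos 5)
Your proposal is correct and follows essentially the same route as the paper: pass from the normal equations of the modified problem to $V_m^\ast A^\ast(b-AV_{m-1}z_{m-1})=Ke_m$, then read off the top $m-1$ components to recover the normal equations $V_{m-1}^\ast A^\ast(b-AV_{m-1}x)=0$ of the reduced problem. Your added remarks (non-uniqueness of $z$, the full-rank/uniqueness point, and the range-equality shortcut) are sound embellishments but not a different argument.
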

\begin{proof}
From the equation (\ref{eq5}) we have 
$$(I-e_me_m^\ast)V_m^\ast A^\ast AV_m(I-e_me_m^\ast)z = (I-e_me_m^\ast)V_m^\ast A^\ast b. $$ 
On substituting $V_m (I-e_me_m^\ast)z = V_{m-1}z_{m-1}$
the previous equation gives
\begin{equation}\label{N2}\relax
(I-e_me_m^\ast)V_m^\ast A^\ast A V_{m-1}z_{m-1} = (I-e_me_m^\ast)V_m^\ast
A^\ast b.
\end{equation}
As linear span of a vector $e_m$ is the null space of a projection operator $(I-e_me_m^\ast),$ this equation gives the following for some scalar $K:$ 
\begin{equation}\label{N1}\relax
V_m^\ast A^\ast (b-AV_{m-1}z_{m-1})= Ke_m. 
\end{equation}
Thus the vector $V_m^\ast A^\ast (b-AV_{m-1}z_{m-1})$ is parallel to the vector $e_m.$ Since $e_i^\ast e_m=0$ for $i=1,2, \ldots, m-1,$ the above  equation gives
$$V_{m-1}^\ast A^\ast A(b-AV_{m-1}z_{m-1}) =0.$$ Therefore,
the vector $z_{m-1}$ is a solution of the least squares problem in
(\ref{V1}). Hence, the theorem proved.
\end{proof}
Observe from the equation~(\ref{N1}) that $K=v_m^\ast A^\ast r_{m-1} =~<Av_m, r_{m-1}>, $ where $r_{m-1}=b-AV_{m-1}z_{m-1}.$
The next theorem relates an approximate solution at the $m^{th}$ iteration of GMRES to residual norms in the $(m-1)^{th}$ and $m^{th}$ iterations. 
\begin{thm}\label{thm2}\relax
Let $r_{m-1}$ and $r_m$ be residual vectors at $(m-1)^{th}$ and $m^{th}$
iterations of GMRES, respectively. Assume that column vectors of the matrix $AV_m$ are linearly independent. If a vector $y$ is the solution of the least
squares problem at $m^{th}$ iteration of GMRES then 
$$\|r_{m-1}\|^2-\|r_m\|^2 = Ke_m^\ast y.$$
\end{thm}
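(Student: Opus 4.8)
The plan is to transport the step-$(m-1)$ quantities into the coordinate system used at step $m$ and then combine the optimality (normal equations) of $r_m$ with equation~(\ref{N1}). I would start by writing $r_m=b-AV_my$, and letting $\widetilde z_{m-1}\in\mathcal{C}^{m}$ be the vector $z_{m-1}$ of Theorem~\ref{thm1} with a zero appended as its last entry. Since $V_m\widetilde z_{m-1}=V_{m-1}z_{m-1}$, we get $r_{m-1}=b-AV_{m-1}z_{m-1}=b-AV_m\widetilde z_{m-1}$, and by Theorem~\ref{thm1} this is exactly the GMRES residual at step $m-1$. Subtracting the two expressions, $r_{m-1}-r_m=AV_m(y-\widetilde z_{m-1})$, so $r_{m-1}-r_m$ lies in the column space of $AV_m$.

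Next, I would use that $y$ solves the least squares problem at step $m$, which by the normal equations~(\ref{eq2}) (for $i=m$) means $V_m^\ast A^\ast r_m=0$, i.e. $r_m$ is orthogonal to the column space of $AV_m$. Hence $r_m^\ast(r_{m-1}-r_m)=0$, that is $r_m^\ast r_{m-1}=\|r_m\|^2$, a real number, so also $r_{m-1}^\ast r_m=\|r_m\|^2$. Therefore
$$\|r_{m-1}\|^2-\|r_m\|^2=r_{m-1}^\ast r_{m-1}-r_{m-1}^\ast r_m=r_{m-1}^\ast(r_{m-1}-r_m)=r_{m-1}^\ast AV_m\,(y-\widetilde z_{m-1}).$$
Finally, equation~(\ref{N1}) gives $V_m^\ast A^\ast r_{m-1}=Ke_m$; taking the conjugate transpose, $r_{m-1}^\ast AV_m=\overline{K}\,e_m^\ast$, and since the last entry of $\widetilde z_{m-1}$ is $0$ we have $e_m^\ast(y-\widetilde z_{m-1})=e_m^\ast y$. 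Substituting yields $\|r_{m-1}\|^2-\|r_m\|^2=\overline{K}\,e_m^\ast y$, which is the asserted identity (written as $K e_m^\ast y$ in the statement; both sides are real, so the conjugate on $K$ is immaterial under the convention fixing $K=v_m^\ast A^\ast r_{m-1}$ noted after Theorem~\ref{thm1}, and is absent entirely in real arithmetic).

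The computations here are all routine; the only step that needs genuine care is the first one — recognising that $r_{m-1}$, viewed in the step-$m$ Krylov coordinates as $b-AV_m\widetilde z_{m-1}$, lets the single orthogonality relation $V_m^\ast A^\ast r_m=0$ do all the work, and observing that the hypothesis that the columns of $AV_m$ are linearly independent is precisely what makes $y$ (and $z_{m-1}$) well defined, so that these manipulations are unambiguous. An alternative packaging of the same idea is the Pythagorean identity $\|b-AV_mx\|^2=\|r_m\|^2+\|AV_m(x-y)\|^2$ applied at $x=\widetilde z_{m-1}$, which reduces the claim to evaluating $\|AV_m(y-\widetilde z_{m-1})\|^2$ by comparing the normal equations at steps $m$ and $m-1$; I would expect the version above to be the cleaner one to write out.
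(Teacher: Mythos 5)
Your proof is correct, but it takes a genuinely different (and leaner) route than the paper. The paper combines the normal equations $V_m^\ast A^\ast AV_my=V_m^\ast A^\ast b$ with (\ref{N1}), then uses the linear independence of the columns of $AV_m$ to invert $V_m^\ast A^\ast AV_m$ and obtain the explicit formulas (\ref{avmpos}) and (\ref{eq7}) for $y-\bigl(\begin{smallmatrix}z_{m-1}\\0\end{smallmatrix}\bigr)$ and $r_{m-1}-r_m$; it then pairs (\ref{eq7}) with $r_0=b$, uses the normal equations once more to recognize the right-hand side as $Ke_m^\ast y$, and finally converts $r_i^\ast r_0$ into $\|r_i\|^2$ via the orthogonality $r_i\perp r_i-r_0$. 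You instead never form the inverse: you observe $r_{m-1}-r_m=AV_m\bigl(y-\widetilde z_{m-1}\bigr)$, use $V_m^\ast A^\ast r_m=0$ to replace the left side by $r_{m-1}^\ast(r_{m-1}-r_m)$, and then read off the value directly from the adjoint of (\ref{N1}). This buys two things: the linear-independence hypothesis is needed only for well-posedness rather than for invertibility (so your argument survives even when the normal-equations matrix is singular, as long as a least-squares solution $y$ is fixed), and the computation is shorter. What it does not produce are the intermediate identities (\ref{avmpos}) and (\ref{eq7}), which the paper reuses later (e.g.\ (\ref{avmpos}) is the backbone of the proof of Lemma~\ref{thm3}), so the paper's longer detour is not wasted. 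On the constant: you correctly land on $\overline{K}\,e_m^\ast y$ with $K=v_m^\ast A^\ast r_{m-1}$; the paper's own inner-product step has the same conjugation looseness (it writes $K$ where its conventions give $\overline{K}$, and it freely identifies $v_m^\ast A^\ast r_{m-1}$ with $\langle Av_m,r_{m-1}\rangle$), so your flagged caveat is apt, though strictly speaking the realness of the left-hand side alone does not let you swap $K$ for $\overline{K}$; the honest fix is to state the theorem with $K=\langle Av_m,r_{m-1}\rangle=r_{m-1}^\ast Av_m$ or to work over the reals.
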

\begin{proof}
From the hypothesis of the theorem we have $V_m^\ast A^\ast AV_my = V_m^\ast A^\ast b.$ Observe that by using this, the equation~(\ref{N1}) gives
$$V_m^\ast A^\ast AV_m\Big(y-\begin{pmatrix}
z_{m-1}\\0
\end{pmatrix}\Big)=Ke_m. $$
As column vectors of the matrix $AV_m$ are linearly independent the above equation gives the following:
\begin{equation}\label{avmpos}\relax
y-\begin{pmatrix}
z_{m-1}\\0
\end{pmatrix}= K (V_m^\ast A^\ast AV_m)^{-1} e_m.
\end{equation}
This implies 
$$AV_m \Big(y-\begin{pmatrix}
z_{m-1}\\0
\end{pmatrix}\Big)= K.AV_m (V_m^\ast A^\ast AV_m)^{-1} e_m,$$
and 
\begin{equation}\label{eq7}\relax
r_{m-1}-r_m= K. AV_m (V_m^\ast A^\ast AV_m)^{-1} e_m.
\end{equation}
In the above equation we used the relations $r_m= b-AV_my,$ and the following:
\begin{equation*}\label{resm1}\relax
r_{m-1}=b-AV_{m-1}z_{m-1}= b-AV_m\begin{pmatrix}
z_{m-1}\\0
\end{pmatrix}.
\end{equation*}
Now, apply an inner product with
$r_0=b$ on both the sides of the equation (\ref{eq7}). It gives \begin{equation}\label{eqst}\relax
r_{m-1}^\ast r_0-r_m^\ast r_0= Ke_m^\ast (V_m^\ast A^\ast AV_m)^{-1}
V_m^\ast A^\ast b.
\end{equation}
By using $V_m^\ast A^\ast AV_my = V_m^\ast A^\ast b,$ observe that the right-hand side expression in the above equation is $Ke_m^\ast
y.$ The proof will be complete if
$$r_{m-1}^\ast r_0-r_m^\ast r_0 = \|r_{m-1}\|^2-\|r_m\|^2.$$
From the Theorem-\ref{thm1} we know that $r_{m-1}=b-AV_{m-1}z_{m-1}$ is orthogonal to $AV_{m-1}z_{m-1}=r_{m-1}-b=r_{m-1}-r_0.$ Similarly, the residual vector $r_m$ is orthogonal to $r_m-r_0.$ Thus, 
$r_i^\ast r_0 = \|r_i\|^2~~\mbox{for}~i=m-1,~m.$ Therefore, the above equation holds true, and the proof is over.
\end{proof}
\section{Equality of Residual and Harmonic Residual vectors}
In this section, we define Harmonic Residual vectors and will discuss the  coincidence of these vectors with a residual vector in GMRES. As in the previous section, we fix iteration number in GMRES as $m$ and will use the notation of the Section-2.
\begin{defn}\label{defn1}\relax
The $m$ eigenvalues $\{\sigma_j\}_{j=1}^m$ of the generalized eigenvalue problem 
$$V_m^\ast A^\ast AV_mu = \sigma V_m^\ast A^\ast V_m u $$
are called the Harmonic Ritz values at iteration $m$ of GMRES. The vectors $\{u_j\}_{j=1}^m$ are called the Harmonic Ritz vectors. The pair $(\sigma,u)$ is called the Harmonic Ritz pair.
\end{defn}
\begin{defn}
Let $(\sigma,u)$ be a Harmonic Ritz pair at iteration $m$ of GMRES. The vector $AV_mu-\sigma V_mu$ is called the Harmonic residual vector at iteration $m$ of GMRES.
\end{defn}
The following theorem derives a necessary condition for the equality of a Harmonic resdiual vector $AV_mu-\sigma V_mu$ and $b-AV_my,$ the residual vector at $m^{th}$ iteration of GMRES.
\begin{thm}\label{nece}\relax
Let a Harmonic residual vector $AV_mu-\sigma V_mu$ be the same as the residual vector $b-AV_my$ at $m^{th}$ iteration of GMRES. Then $e_m^\ast y =-e_m^\ast u.$
\end{thm}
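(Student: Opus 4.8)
The plan is to transfer the assumed vector identity into coordinates against the Arnoldi basis and then read off a single scalar equation from the $v_{m+1}$ direction. First I would rewrite each term of the hypothesis $AV_mu-\sigma V_mu = b-AV_my$ using the compact Arnoldi relation $AV_m=V_{m+1}\tilde H_m$ coming from (\ref{eq1a}), together with $b=\beta v_1$ and $V_m=V_{m+1}\big(\begin{smallmatrix}I_m\\0\end{smallmatrix}\big)$. Left-multiplying by $V_{m+1}^\ast$ and using $V_{m+1}^\ast V_{m+1}=I_{m+1}$ (legitimate because the columns of $V_{m+1}$ are orthonormal, hence $V_{m+1}$ is injective), the identity collapses to the following equation in $\mathcal{C}^{m+1}$:
\[
\tilde H_m u-\sigma\begin{pmatrix}u\\0\end{pmatrix}=\beta e_1-\tilde H_m y .
\]

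Next I would isolate the $(m+1)^{\text{th}}$ component of both sides. The last row of $\tilde H_m$ is $[\,0\ \cdots\ 0\ h_{m+1,m}\,]=h_{m+1,m}e_m^\ast$, while the last entries of $e_1\in\mathcal{C}^{m+1}$ and of $\big(\begin{smallmatrix}u\\0\end{smallmatrix}\big)$ both vanish. Hence the bottom component of the displayed equation is simply $h_{m+1,m}\,e_m^\ast u=-\,h_{m+1,m}\,e_m^\ast y$. (Equivalently, one can bypass the vector equation and apply the single linear functional $v_{m+1}^\ast$ directly to the hypothesis, using $v_{m+1}^\ast AV_m=h_{m+1,m}e_m^\ast$ and $v_{m+1}^\ast V_m=0=v_{m+1}^\ast b$.) Since $H_m$ is an unreduced upper Hessenberg matrix — equivalently, GMRES has not reached a lucky breakdown at step $m$ — the subdiagonal entry $h_{m+1,m}$ is nonzero, and dividing through yields $e_m^\ast y=-e_m^\ast u$, as claimed.

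I do not expect a serious obstacle; the proof is essentially a one-line projection. The only point requiring care is bookkeeping the ambient spaces ($\mathcal{C}^m$ versus $\mathcal{C}^{m+1}$) and choosing the \emph{right} projection: projecting the hypothesis with $V_m^\ast A^\ast$ annihilates both sides (the left by Definition~\ref{defn1}, the right by the normal equations (\ref{eq2})), so it carries no information, and the content of the theorem is recovered only by retaining the $v_{m+1}$ component that $V_m^\ast$ discards.
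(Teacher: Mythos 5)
Your proof is correct and follows essentially the same route as the paper: substitute the Arnoldi relation and $b=\beta V_me_1$ into the hypothesized identity and read off the component along $v_{m+1}$, which gives $h_{m+1,m}e_m^\ast u=-h_{m+1,m}e_m^\ast y$ and hence the claim. If anything, you are slightly more careful than the paper in making explicit that the cancellation requires $h_{m+1,m}\neq 0$ (no lucky breakdown), which the paper leaves implicit.
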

\begin{proof}
Consider $AV_mu-\sigma V_mu=b-AV_my.$ By using $b= \beta V_me_1$ and the equation (\ref{eq1a}) for $i=m,$ this implies
$$V_mH_mu+h_{m+1,m}v_{m+1}e_m^\ast u -\sigma V_m u = \beta V_m e_1 -V_mH_my-h_{m+1,m}v_{m+1}e_m^\ast y.$$
Recall that columns of the matrix $V_m$ form an orthonormal basis for the  Krylov subspace $\textcal{K}_m(A,b).$ As $v_{m+1}$ is orthogonal to column vectors of $V_m,$ the above equation implies the following:
$$H_mu-\sigma u = \beta e_1-H_my~\mbox{and}~e_m^\ast u = -e_m^\ast y.$$ Hence, the theorem proved.
\end{proof}
In what follows, we prove that $e_m^\ast y =-e_m^\ast u$ is a sufficient condition for the equality of the vectors in the Theorem-\ref{nece}.
\begin{thm}\label{suffnostag}\relax
Let $AV_mu-\sigma V_mu$ be a Harmonic residual vector, and $b-AV_my$ be the residual vector at $m^{th}$ iteration of GMRES. Assume that there is no stagnation at $m^{th}$ iteration, and $e_m^\ast y =-e_m^\ast u.$ Then  $AV_mu-\sigma V_mu=b-AV_my.$
\end{thm}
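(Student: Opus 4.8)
The plan is to pass to the Hessenberg coordinates via the Arnoldi relation (\ref{eq1a}), combine the normal equations that define $y$ with the harmonic eigenvalue equation that defines $(\sigma,u)$, cancel a common factor of $H_m^\ast$ (this is where the no-stagnation hypothesis enters), and finally lift the resulting coordinate identity back to $\textcal{C}^n$.

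First I would rewrite the two defining relations in terms of $H_m$. From (\ref{eq1a}) one gets $V_m^\ast A^\ast AV_m = H_m^\ast H_m + |h_{m+1,m}|^2 e_me_m^\ast$ and $V_m^\ast A^\ast V_m = H_m^\ast$. Hence the GMRES normal equation $V_m^\ast A^\ast AV_m y = \beta V_m^\ast A^\ast V_m e_1$ becomes $H_m^\ast(H_m y-\beta e_1) = -|h_{m+1,m}|^2 (e_m^\ast y)\,e_m$, while the equation of Definition~\ref{defn1}, $(H_m^\ast H_m + |h_{m+1,m}|^2 e_me_m^\ast)u=\sigma H_m^\ast u$, becomes $H_m^\ast(H_m u-\sigma u) = -|h_{m+1,m}|^2 (e_m^\ast u)\,e_m$. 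Adding these and using the hypothesis $e_m^\ast y=-e_m^\ast u$ to cancel the right-hand sides, I obtain $H_m^\ast\big(H_m(u+y)-\sigma u-\beta e_1\big)=0$.

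Next I would argue that the absence of stagnation makes $H_m$ (hence $H_m^\ast$) nonsingular, so this factor of $H_m^\ast$ may be dropped. Indeed, rewriting (\ref{N1}) in Hessenberg form exactly as above gives $H_m^\ast\big(\beta e_1-H_m\binom{z_{m-1}}{0}\big)=K e_m$; if $H_m$ were singular then, being unreduced upper Hessenberg, its kernel would be spanned by a vector with nonzero last entry, so $e_m\notin(\ker H_m)^\perp=\mathrm{Range}(H_m^\ast)$, forcing $K=0$, and hence (via $K=\langle Av_m,r_{m-1}\rangle$, $r_{m-1}\perp A\textcal{K}_{m-1}(A,b)$ by Theorem~\ref{thm1}, and uniqueness of the GMRES iterate) $r_m=r_{m-1}$, i.e.\ stagnation. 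So under our hypothesis $H_m^\ast$ is invertible and $H_m(u+y)-\sigma u-\beta e_1=0$, that is, $H_mu-\sigma u=\beta e_1-H_my$.

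Finally I would lift this back to $\textcal{C}^n$: apply $V_m$ to both sides, add $h_{m+1,m}(e_m^\ast u)\,v_{m+1}$ to each side, and use (\ref{eq1a}) together with $b=\beta V_m e_1$ and (once more) $e_m^\ast u=-e_m^\ast y$ to identify the left side with $AV_mu-\sigma V_mu$ and the right side with $b-AV_my$; this yields the claimed equality. I expect the only real obstacle to be the middle paragraph — pinning down that $H_m$ is nonsingular precisely when GMRES does not stagnate at step $m$, so that $H_m^\ast$ may be cancelled; the remaining steps are routine manipulations of the Arnoldi recurrence.
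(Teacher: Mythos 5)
Your proposal is correct and follows essentially the same route as the paper: combine the Hessenberg forms (\ref{harh}) and (\ref{res}) of the harmonic pair equation and the GMRES normal equations, use $e_m^\ast y=-e_m^\ast u$ to cancel the rank-one terms, drop the factor $H_m^\ast$ by nonsingularity, and lift back with $V_m$ and the Arnoldi relation (\ref{eq1a}). The only difference is that you re-derive the step ``no stagnation $\Rightarrow H_m$ nonsingular'' inline (via the range of $H_m^\ast$ versus the kernel vector with nonzero last entry, forcing $K=0$ in (\ref{N1}) and hence $r_m=r_{m-1}$), which is exactly the content the paper instead imports from its Lemma~\ref{thm3} and Theorem~\ref{thm4}.
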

\begin{proof}
As $(\sigma,u)$ is a Harmonic Ritz pair, by using the Definition \ref{defn1} and the equation (\ref{eq1a} for $i=m$, it satisfy the following equation:
\begin{equation}\label{harh}\relax
H_m^\ast H_m u+|h_{m+1,m}|^2e_me_m^\ast u = \sigma H_m^\ast u.
\end{equation}
Similarly, as $b-AV_my$ is the residual vector at $m^{th}$ iteration, by using the equation (\ref{eq3}) for $i=m,$ the vector $y$ satisfies the following equation:
\begin{equation}\label{res}\relax
(H_m^\ast H_m+|h_{m+1,m}|^2 e_me_m^\ast)y = \beta H_m^\ast e_1.
\end{equation}
By using $e_m^\ast y =-e_m^\ast u,$ the above two equations imply the following relation:
$$H_m^\ast(H_mu-\sigma u+H_m y-\beta e_1) = 0. $$
As there is no stagnation at $m^{th}$ iteration, by using the Lemma-\ref{thm3} and the Theorem-\ref{thm4} from the Section 4, observe that $H_m^\ast $ is a non-singular matrix. Thus, the above equation implies
$H_mu-\sigma u=\beta e_1-H_m y. $ On multiplying both the sides with a matrix $V_m$ gives $V_mH_mu-\sigma V_m u=\beta V_m e_1-V_mH_m y.$
Now, by using the equation  (\ref{eq3}) for $i=m,$ this equation can be rewritten as follows:
$$AV_mu-h_{m+1,m}v_{m+1}e_m^\ast u -\sigma V_m u=\beta V_m e_1-AV_my+h_{m+1,m}v_{m+1}e_m^\ast y. $$
As $ \beta V_m e_1 = b$ and $e_m^\ast y =-e_m^\ast u,$ this gives $AV_mu-\sigma V_mu=b-AV_my,$ the required equation. Hence, the proof is over.
\end{proof}
Observe that the Theorems-\ref{nece} and \ref{suffnostag} can be generalized  to the following:
\begin{thm}\label{gene}\relax
Let $AV_mu-\sigma V_mu$ be a Harmonic residual vector, and $b-AV_my$ be the residual vector at $m^{th}$ iteration of GMRES. Assume that there is no stagnation at $m^{th}$ iteration. Then for some non-zero scalar $K,$ $AV_mu-\sigma V_mu=K(b-AV_my),$ if and only if $e_m^\ast u =-K(e_m^\ast y).$
\end{thm}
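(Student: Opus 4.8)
The plan is to mimic the proofs of Theorems~\ref{nece} and \ref{suffnostag}, carrying the scalar $K$ through each algebraic manipulation. For the forward direction, I would start from the hypothesised identity $AV_mu-\sigma V_mu=K(b-AV_my)$, substitute $b=\beta V_me_1$ together with the Arnoldi relation \eqref{eq1a} for $i=m$ on both sides, and then split the resulting vector equation into its component along $\mathrm{span}(V_m)$ and its component along $v_{m+1}$. Since $v_{m+1}\perp V_m$, orthogonality forces the two components to vanish separately, and the $v_{m+1}$-component reads $h_{m+1,m}e_m^\ast u = -K\,h_{m+1,m}e_m^\ast y$; dividing by $h_{m+1,m}\neq 0$ (the Hessenberg matrix is unreduced) yields $e_m^\ast u=-K(e_m^\ast y)$.

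For the converse, I would assume $e_m^\ast u=-K(e_m^\ast y)$ and no stagnation at step $m$. As in Theorem~\ref{suffnostag}, the Harmonic Ritz pair satisfies \eqref{harh} and the GMRES solution $y$ satisfies \eqref{res}. The idea is to form the combination $\eqref{harh}+K\cdot\eqref{res}$, so that the rank-one terms $|h_{m+1,m}|^2 e_m e_m^\ast u$ and $K|h_{m+1,m}|^2 e_m e_m^\ast y$ cancel by hypothesis, leaving
$$H_m^\ast\big(H_mu-\sigma u + K H_m y - K\beta e_1\big)=0.$$
Invoking Lemma~\ref{thm3} and Theorem~\ref{thm4} (no stagnation $\Rightarrow$ $H_m^\ast$ nonsingular), I conclude $H_mu-\sigma u = K(\beta e_1 - H_m y)$. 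Multiplying by $V_m$ and re-expressing $V_mH_m$ via the Arnoldi relation \eqref{eq1a} as $AV_m - h_{m+1,m}v_{m+1}e_m^\ast$ gives
$$AV_mu - h_{m+1,m}v_{m+1}e_m^\ast u - \sigma V_m u = K\beta V_m e_1 - K A V_m y + K h_{m+1,m}v_{m+1}e_m^\ast y.$$
Using $\beta V_m e_1 = b$ and the hypothesis $e_m^\ast u=-K(e_m^\ast y)$ to kill the $v_{m+1}$ terms, this collapses to $AV_mu-\sigma V_mu = K(b-AV_my)$.

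The only genuine subtlety is bookkeeping: one must be careful that $K$ appears on the correct side in each step and that no spurious conjugation of $K$ creeps in when taking adjoints (here $K$ multiplies vectors, not operators being conjugate-transposed, so $K$ passes through unchanged). I also need $K\neq 0$ for the forward direction to be a genuine equivalence — with $K=0$ the statement is vacuous since a Harmonic residual vector $AV_mu-\sigma V_mu$ is nonzero (it equals $h_{m+1,m}v_{m+1}e_m^\ast u$ up to the $V_m$-part, and more directly cannot be zero for a genuine Harmonic Ritz pair), so the hypothesis $K\neq 0$ is exactly what makes both directions meaningful. No single step is hard; the main obstacle is simply ensuring the cancellation of the rank-one perturbation terms in the converse direction relies on the hypothesis in precisely the form stated, which it does.
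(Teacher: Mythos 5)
Your proof is correct and follows exactly the route the paper intends: the paper gives no separate argument for Theorem~\ref{gene}, stating only that it is immediate from the proofs of Theorems~\ref{nece} and \ref{suffnostag}, and your write-up is precisely those two proofs rerun with the scalar $K$ carried through (splitting along $\mathrm{span}(V_m)$ and $v_{m+1}$ for the forward direction, and combining \eqref{harh} with $K$ times \eqref{res} plus nonsingularity of $H_m^\ast$ in the absence of stagnation for the converse). No gaps; your bookkeeping of $K$ and the use of $h_{m+1,m}\neq 0$ match what the paper's template requires.
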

The proof of theorem-\ref{gene} is trivial from the proofs of the Theorems-\ref{nece} and \ref{suffnostag}. The following example illustrates the Theorem-\ref{gene}.
\begin{eg}
Consider the system of equations $Ax=b,$ where
\begin{equation*}
A = \begin{bmatrix}
1~ & ~1~ & ~1\\
1  & 0 & 1 \\
0 & 1 & 1 
\end{bmatrix},
\end{equation*}
and $b=[1~ 0~ 0]'=e_1.$ Similarly, we use $e_2$ and $e_3$ to represent the vectors $[0~1~0]'$ and $[0~0~1]'$ respectively. Let the zero vector is an approximate solution so that $e_1$ is the initial residual vector $r_0$ in GMRES.

Starting with $e_1$, for the matrix $A$ the Arnoldi algorithm gives the following after 2 iterations: 
\begin{equation*}
AV_2 = V_2H_2 + e_3[0~~1]
\end{equation*}
where $V_2 = [e_1,e_2]$, and
\begin{equation*}H_2 =
\begin{bmatrix}
1~ & ~1 \\
1 &  0 
\end{bmatrix}.
\end{equation*}
The residual vector at the $2^{nd}$ iteration of GMRES is $$r_2=b-AV_2[1/3~1/3]'=[1/3~-1/3~~-1/3]'.$$ This example do not have the stagnation of GMRES. The harmonic Ritz values at $2^{rd}$ iteration of GMRES are $\pm \sqrt{3},$ and $ V_2[\frac{\sqrt{3}+1}{2}~~1/2]'=[\frac{\sqrt{3}+1}{2}~~1/2~~0]'$ is the harmonic Ritz vector corresponding to $\sqrt{3}$. Thus, the corresponding harmonic residual vector is 
$$A \begin{bmatrix}
\frac{\sqrt{3}+1}{2}\\1/2\\0
\end{bmatrix}
-\sqrt{3}\begin{bmatrix}
\frac{\sqrt{3}+1}{2}\\1/2\\0
\end{bmatrix}=\begin{bmatrix}
-1/2\\1/2\\1/2
\end{bmatrix}.$$
Observe that for this example  the above vector is equal to $\frac{-3}{2}* r_2,$ and $e_2^\ast u=1/2,$ $e_2^\ast y=1/3.$ Thererfore, the vectors $u$ and $y$ satisfy the necessary and sufficient condition in the Theorem-\ref{gene}.
\end{eg}

The Theorem-\ref{suffnostag} has given a sufficient condition in the absence of the stagnation for the coincidence of a Harmonic residual vector and the residual vector of GMRES at the $m^{th}$ iteration. The following theorem  discusses this coincidence of vectors in the presence of the stagnation. We delay its proof to the next section for the convenience.
\begin{thm}\label{suffstag}\relax
Let $AV_mu-\sigma V_mu$ be a Harmonic residual vector, and $b-AV_my$ be the residual vector at $m^{th}$ iteration of GMRES. Assume that there is a stagnation at $m^{th}$ iteration. Then  $AV_mu-\sigma V_mu=b-AV_my+\xi V_ms_2,$ where $s_2$ is a vector such that $V_m^\ast A^\ast V_ms_2=0,$ and $\xi$ is some scalar.
\end{thm}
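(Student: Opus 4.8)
The plan is to re-use the Arnoldi translation already applied in the proofs of Theorems~\ref{nece} and~\ref{suffnostag}, while exploiting two features of a stagnating step $m$. The first is that $H_m^\ast$ is singular: this is precisely the content of Lemma~\ref{thm3} and Theorem~\ref{thm4}, whose non-stagnation case was quoted in the proof of Theorem~\ref{suffnostag}. The second is that $e_m^\ast y = 0$. To see this, note that stagnation means $r_m = r_{m-1}$, and by Theorem~\ref{thm1} the vector $z_{m-1}$ for which $r_{m-1} = b - AV_{m-1}z_{m-1}$ is the GMRES least squares solution at step $m-1$; writing $r_{m-1} = b - AV_m\begin{pmatrix}z_{m-1}\\0\end{pmatrix}$ and using that the columns of $AV_m = V_{m+1}\tilde H_m$ are linearly independent (the subdiagonal entries $h_{j+1,j}$ are non-zero, and $h_{m+1,m}\neq 0$ since a stagnating step is not a convergence step), uniqueness of the minimiser forces $y = \begin{pmatrix}z_{m-1}\\0\end{pmatrix}$, hence $e_m^\ast y = 0$.

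The crucial step is to show that $e_m^\ast u = 0$ for the given (finite) Harmonic Ritz pair $(\sigma,u)$. From Definition~\ref{defn1} and~(\ref{eq1a}) the vector $u$ satisfies~(\ref{harh}), which rearranges to
$$H_m^\ast(H_mu-\sigma u)=-|h_{m+1,m}|^2(e_m^\ast u)\,e_m .$$
The left-hand side lies in $\mathrm{range}(H_m^\ast)=(\ker H_m)^\perp$. Because $H_m$ is an unreduced upper Hessenberg matrix its rank is at least $m-1$, so at a stagnating step (where $H_m$ is singular) $\ker H_m$ is spanned by a single vector $w$; solving $H_mw=0$ from the bottom row upward, the non-zero subdiagonal forces $w_m\neq 0$. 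Thus $e_m$ is not orthogonal to $\ker H_m$, and since the right-hand side above is a multiple of $e_m$ lying in $(\ker H_m)^\perp$, the scalar $e_m^\ast u$ must vanish. I expect this to be the main obstacle: it requires the Section~5 result that $H_m^\ast$ is singular under stagnation, a careful description of $\ker H_m$ and its last coordinate, and the understanding that $(\sigma,u)$ is a genuine finite Harmonic Ritz pair (a stagnating step has exactly one infinite Harmonic Ritz value, to which the statement does not apply).

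With $e_m^\ast u = 0$ and $e_m^\ast y = 0$ the conclusion follows by a short computation. Expanding both vectors through~(\ref{eq1a}) with $i=m$, the $v_{m+1}$ terms drop out, so $AV_mu-\sigma V_mu = V_m(H_mu-\sigma u)$ and $b-AV_my = V_m(\beta e_1-H_my)$; subtracting,
$$(AV_mu-\sigma V_mu)-(b-AV_my)=V_ms,\qquad s:=H_mu-\sigma u+H_my-\beta e_1 .$$
Applying $H_m^\ast$ to $s$ and invoking~(\ref{harh}) with $e_m^\ast u=0$ (so $H_m^\ast(H_mu-\sigma u)=0$) together with~(\ref{res}) with $e_m^\ast y=0$ (so $H_m^\ast(H_my-\beta e_1)=0$) gives $H_m^\ast s=0$. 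Since $V_m^\ast A^\ast V_m=H_m^\ast$, this is exactly $V_m^\ast A^\ast V_ms=0$, so setting $s_2=s$ and $\xi=1$ (or $s_2=s/\|s\|$, $\xi=\|s\|$ when $s\neq 0$) yields $AV_mu-\sigma V_mu = b-AV_my+\xi V_ms_2$ with $V_m^\ast A^\ast V_ms_2=0$, as claimed.
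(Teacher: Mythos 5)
Your proof is correct, and its final assembly coincides with the paper's: once $e_m^\ast u=0$ and $e_m^\ast y=0$ are in hand, both you and the paper subtract (\ref{res}) from (\ref{harh}), conclude $H_m^\ast s=0$ for $s=H_mu-\sigma u+H_my-\beta e_1$, note $V_m^\ast A^\ast V_m=H_m^\ast$, and multiply by $V_m$ after the $v_{m+1}$ terms drop out. Where you genuinely diverge is in how you reach the two prerequisites. The paper obtains $e_m^\ast y=0$ from Theorem~\ref{thm2} and Lemma~\ref{thm3} (stagnation forces $K=e_m^\ast y=0$) and obtains $e_m^\ast u=0$ by invoking Theorem~\ref{yu}, whose proof is the block-partition computation (\ref{hm})--(\ref{nzero}). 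You instead get $e_m^\ast y=0$ directly from uniqueness of the GMRES minimiser when $AV_m=V_{m+1}\tilde H_m$ has full column rank, and you get $e_m^\ast u=0$ by rewriting (\ref{harh}) as $H_m^\ast(H_mu-\sigma u)=-|h_{m+1,m}|^2(e_m^\ast u)e_m$, observing that the left side lies in $\mathrm{range}(H_m^\ast)=(\ker H_m)^\perp$ while the one-dimensional kernel of the singular unreduced Hessenberg $H_m$ has non-vanishing last component, so $e_m$ cannot lie in that orthogonal complement. This range--kernel argument is shorter and more transparent than the paper's Theorem~\ref{yu} computation and works uniformly for $\sigma=0$; its only external input is the singularity of $H_m$ under stagnation (Theorem~\ref{thm4}), which you correctly supply. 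Your aside that the argument concerns finite harmonic Ritz pairs (the singular pencil also has an infinite eigenvalue at a stagnating step) is a fair caveat that the paper leaves implicit, and your handling of the degenerate case $s=0$ via $\xi$ is consistent with the statement.
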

\section{The stagnation of GMRES}
This section derives a necessary and sufficient condition on harmonic Ritz vectors for the stagnation of GMRES. For this, it first derives a necessary and sufficient condition on $H_m,$ when residuals at $(m-1)^{th}$ and $m^{th}$ iterations of GMRES are stagnated, that means $\|r_{m-1}\|= \|r_m\|.$  From the Theorem-\ref{thm2}, observe  that this happens if and only if $K=0$ or $e_m^\ast y =0.$  The next theorem shows that if either $K$ or $e_m^\ast y$ is zero, then the other one also equal to the zero.
\begin{lem}\label{thm3}\relax
Let a scalar $K$ and a vector $y$ be the same as in the Theorem-\ref{thm2}. Then, $K=0$ if and only if $e_m^ \ast y =0.$
\end{lem}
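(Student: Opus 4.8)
The plan is to obtain the equivalence directly from an identity already produced inside the proof of Theorem~\ref{thm2}, so that almost no new work is needed. Concretely, I would start from equation~(\ref{avmpos}),
$$y-\begin{pmatrix} z_{m-1}\\0\end{pmatrix}= K\,(V_m^\ast A^\ast AV_m)^{-1} e_m,$$
and apply the row vector $e_m^\ast$ to both sides. Since $z_{m-1}$ has length $m-1$, the last entry of $\begin{pmatrix} z_{m-1}\\0\end{pmatrix}$ vanishes, so $e_m^\ast\begin{pmatrix} z_{m-1}\\0\end{pmatrix}=0$, and one is left with the scalar identity
$$e_m^\ast y = K\,e_m^\ast (V_m^\ast A^\ast AV_m)^{-1} e_m.$$

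Next I would argue that the coefficient $e_m^\ast (V_m^\ast A^\ast AV_m)^{-1} e_m$ is strictly positive. This uses the hypothesis inherited from Theorem~\ref{thm2} that the columns of $AV_m$ are linearly independent: then $M:=V_m^\ast A^\ast AV_m$ is Hermitian positive definite, hence $M^{-1}$ is Hermitian positive definite as well, and in particular its $(m,m)$ diagonal entry $e_m^\ast M^{-1} e_m$ is a positive real number. With this in hand the lemma is immediate, since the displayed identity exhibits $e_m^\ast y$ as a nonzero scalar multiple of $K$, so $e_m^\ast y=0$ exactly when $K=0$. (If one prefers to separate the two implications: when $K=0$, equation~(\ref{avmpos}) gives $y=\begin{pmatrix} z_{m-1}\\0\end{pmatrix}$ and hence $e_m^\ast y=0$; conversely, when $e_m^\ast y=0$, the displayed identity gives $K\,e_m^\ast M^{-1} e_m=0$, and positivity of that diagonal entry forces $K=0$.)

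There is no genuinely hard step here. The only point that requires a line of justification is the strict positivity $e_m^\ast M^{-1} e_m>0$, which is precisely where the full-column-rank assumption on $AV_m$ — already invoked in Theorem~\ref{thm2} to make $M$ invertible — is used; everything else is just reuse of equation~(\ref{avmpos}).
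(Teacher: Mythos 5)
Your proposal is correct and follows essentially the same route as the paper: both arguments rest on applying $e_m^\ast$ to equation~(\ref{avmpos}) and invoking positive definiteness of $(V_m^\ast A^\ast AV_m)^{-1}$ (guaranteed by the full-column-rank assumption on $AV_m$) to make the coefficient of $K$ nonzero. Your single-identity formulation merely packages the paper's two implications into one line.
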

\begin{proof}
First we prove $e_m^\ast y =0,$ if $K=0.$ By using the equation~(\ref{avmpos}), note that $K=0$ implies $$y=\begin{pmatrix}
z_{m-1}\\0
\end{pmatrix}.$$ Recall that $z_{m-1}$ is a vector of length $(m-1).$ Therefore, $e_m^ \ast y =0.$ Next, we prove the converse, that means, $K=0$ if $e_m^\ast y =0.$  
By using the equation (\ref{avmpos}), observe that $e_m^ \ast y =0$ implies $$K e_m^\ast (V_m^\ast A^\ast AV_m)^{-1} e_m =0.$$
As column vectors of the matrix $AV_m$ are linearly independent, the matrix $ (V_m^\ast A^\ast AV_m)^{-1}$ is a positive definite matrix. Therefore, this implies $K=0.$ Hence, the proof is over.
\end{proof}
By using the Lemma-\ref{thm3}, the following theorem proves that the stagnation at $m^{th}$ iteration of GMRES occurs if and only if $H_m$ is a singular matrix.
\begin{thm}\label{thm4}\relax
Let the vectors $r_{m-1},r_m$ and $y$ be the same as in the Theorem-\ref{thm2}, and $H_m$ is an upper Hessenberg matrix at $m^{th}$ iteration of GMRES. Assume that $r_{m-1} \neq 0.$ Then $e_m^\ast y=0$ if and only if $H_m$ is a singular matrix.
\end{thm}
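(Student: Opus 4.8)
The plan is to collapse the statement to the single identity $H_m^{\ast}(V_m^{\ast}r_{m-1})=Ke_m$, and then read off both implications from it, using Lemma~\ref{thm3} to pass between the condition ``$e_m^{\ast}y=0$'' and the condition ``$K=0$''.

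First I would record two elementary facts. Left-multiplying the Arnoldi relation~(\ref{eq1a}) (with $i=m$) by $V_m^{\ast}$ and using $V_m^{\ast}V_m=I$ and $V_m^{\ast}v_{m+1}=0$ gives $V_m^{\ast}AV_m=H_m$, hence $V_m^{\ast}A^{\ast}V_m=H_m^{\ast}$. Also, the GMRES residual $r_{m-1}=b-AV_{m-1}z_{m-1}$ lies in $\textcal{K}_m(A,b)=\mathrm{range}(V_m)$, since $b\in\textcal{K}_m(A,b)$ and $AV_{m-1}z_{m-1}\in A\,\textcal{K}_{m-1}(A,b)\subseteq\textcal{K}_m(A,b)$; hence $r_{m-1}=V_m t$ with $t:=V_m^{\ast}r_{m-1}$, and $\|t\|=\|r_{m-1}\|\neq 0$ by hypothesis. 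Substituting $V_m^{\ast}A^{\ast}r_{m-1}=V_m^{\ast}A^{\ast}V_m t=H_m^{\ast}t$ into equation~(\ref{N1}), which states $V_m^{\ast}A^{\ast}r_{m-1}=Ke_m$, produces
\begin{equation*}
H_m^{\ast}\,(V_m^{\ast}r_{m-1})=Ke_m,\qquad V_m^{\ast}r_{m-1}\neq 0 .
\end{equation*}

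Both implications then follow quickly. If $e_m^{\ast}y=0$, Lemma~\ref{thm3} gives $K=0$, so the displayed identity reads $H_m^{\ast}(V_m^{\ast}r_{m-1})=0$ with $V_m^{\ast}r_{m-1}\neq 0$; thus $H_m^{\ast}$, and hence $H_m$, is singular. Conversely, if $H_m$ is singular, choose $q\neq 0$ with $H_mq=0$ and pair the displayed identity with $q$: since $q^{\ast}H_m^{\ast}=(H_mq)^{\ast}=0$, this yields $K\,(q^{\ast}e_m)=0$. Here I use that $H_m$ is \emph{unreduced} upper Hessenberg, so every nonzero vector in its kernel has a nonzero last coordinate --- if the last entry of $q$ vanished, its first $m-1$ entries would be annihilated by the first $m-1$ columns of $H_m$, whose rows $2,\dots,m$ form a triangular matrix with the nonzero subdiagonal entries $h_{2,1},\dots,h_{m,m-1}$ on the diagonal, forcing $q=0$. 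Hence $q^{\ast}e_m\neq 0$, so $K=0$, and Lemma~\ref{thm3} gives $e_m^{\ast}y=0$.

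The insight that makes the proof short is the reformulation $H_m^{\ast}t=Ke_m$; after that, the only step requiring any care is the structural observation on kernels of unreduced upper Hessenberg matrices, which is exactly where the ``unreduced'' hypothesis on $H_m$ is used --- it excludes the degenerate case $q^{\ast}e_m=0$ in the converse direction. Everything else (the Arnoldi identity, equation~(\ref{N1}), the inclusion $r_{m-1}\in\textcal{K}_m(A,b)$, and Lemma~\ref{thm3}) is given or immediate, and the argument nowhere uses $h_{m+1,m}\neq 0$.
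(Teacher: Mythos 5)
Your proof is correct, but it takes a genuinely different route from the paper. You compress both implications into the single identity $H_m^{\ast}(V_m^{\ast}r_{m-1})=Ke_m$ (equation~(\ref{N1}) rewritten using $r_{m-1}\in\mathrm{range}(V_m)$ and $V_m^{\ast}A^{\ast}V_m=H_m^{\ast}$), and then shuttle between $K=0$ and $e_m^{\ast}y=0$ via Lemma~\ref{thm3}. The paper instead argues the forward direction by contradiction: if $H_m$ were nonsingular, the normal equations~(\ref{eq3}) with $e_m^{\ast}y=0$ would give $y=\beta H_m^{-1}e_1$, hence $r_m=0$ by the Arnoldi relation, and then Theorem~\ref{thm2} would force $r_{m-1}=0$; your version is more direct, exhibiting the explicit nonzero kernel vector $V_m^{\ast}r_{m-1}$ of $H_m^{\ast}$. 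For the converse both arguments pair a kernel vector of $H_m$ (with nonzero last entry, by the unreduced Hessenberg structure, which you justify more explicitly than the paper does) against a linear identity; the paper pairs it against~(\ref{eq3}) and cancels using $h_{m+1,m}\neq 0$ directly, while you pair it against $H_m^{\ast}t=Ke_m$ to get $K=0$ and then invoke Lemma~\ref{thm3}. One small caveat: your closing remark that the argument ``nowhere uses $h_{m+1,m}\neq 0$'' is true of the algebra you wrote, but the dependence is merely hidden in the hypothesis, inherited from Theorem~\ref{thm2} and needed for Lemma~\ref{thm3}, that the columns of $AV_m$ are linearly independent --- when $H_m$ is singular this is equivalent to $h_{m+1,m}\neq 0$, which is exactly the quantity the paper's converse uses explicitly. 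What your approach buys is economy and symmetry (one identity serves both directions, no contradiction argument, no explicit computation of $r_m$); what the paper's buys is independence of the converse direction from Lemma~\ref{thm3} and equation~(\ref{avmpos}).
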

\begin{proof}
Let $e_m^\ast y=0.$ By using the equation~(\ref{eq3}) for $i=m,$ this gives the following equation:
\begin{equation}\label{eq8b}\relax
H_m^\ast H_my = \beta H_m^\ast e_1.
\end{equation}
If $H_m$ is non-singular, this equation implies $H_my =
\beta e_1,$ and $y= \beta H_m^{-1}e_1.$ This together with the equation (\ref{eq1a}) gives 
$$r_m = b-AV_my=b-V_mH_my-h_{m+1,m}v_{m+1}e_m^\ast y = b-\beta V_me_1-h_{m+1,m}v_{m+1}e_m^\ast y.$$
By using the fact that $b=\beta V_me_1,$ and $e_m^\ast y=0,$ this gives $r_m=0.$ Further, 
the Theorem-\ref{thm2} implies $r_{m-1}=0,$ a contradiction to the hypothesis of the theorem that $r_{m-1} \neq 0.$ Therefore, $H_m$ is a singular matrix.

\ni Now, we prove the converse. Let $H_m$ be a singular matrix. Then there exists a non-zero vector $s$ such that $H_ms=0.$  As $H_m$ is
an unreduced upper Hessenberg matrix this implies
$$e_m^\ast s \neq 0.$$  Otherwise, $H_ms=0$ implies $s=0.$ Now, take an inner product with $s$ on both the sides of the equation~(\ref{eq1a}) for $i=m.$  This
gives
$$s^\ast H_m^\ast H_m y +|h_{m+1,m}|^2 s^\ast e_me_m^\ast y = \beta s^\ast H_m^\ast e_1.$$
By using $H_ms=0$ and $e_m^\ast s \neq 0,$ this equation implies $e_m^\ast y =0.$
Therefore, the
theorem proved. 
\end{proof}
The following theorem derives a necessary and sufficient condition on harmonic Ritz vectors for the stagnation at $m^{th}$ iteration of GMRES. For this, it uses the Lemma-\ref{thm3} and the Theorem-\ref{thm4}.
\begin{thm}\label{yu}\relax
Let $(\sigma, u)$ be a harmonic Ritz pair at $m^{th}$ iteration of GMRES. Assume that $b-AV_my$ is the residual vector at the same iteration.
If $e_m^\ast  y=0$ then $e_m^\ast u=0.$
\end{thm}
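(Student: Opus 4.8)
The plan is to connect the hypothesis $e_m^\ast y = 0$ to the singularity of $H_m$, and then to transfer that singularity to the harmonic eigenvalue problem to conclude $e_m^\ast u = 0$. First I would invoke Theorem \ref{thm4}: since $r_{m-1}\neq 0$ (which I would note follows from $e_m^\ast y = 0$ together with the non-stagnation reasoning, or simply assume as in that theorem's hypothesis — actually here one should observe $r_{m-1}=0$ is excluded because GMRES has not yet converged), the condition $e_m^\ast y = 0$ forces $H_m$ to be a singular matrix. So there is a non-zero vector $s$ with $H_m s = 0$, and because $H_m$ is unreduced upper Hessenberg, $e_m^\ast s \neq 0$, exactly as argued in the proof of Theorem \ref{thm4}.

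Next I would bring in the defining equation for the harmonic Ritz pair, equation (\ref{harh}):
\begin{equation*}
H_m^\ast H_m u + |h_{m+1,m}|^2 e_m e_m^\ast u = \sigma H_m^\ast u .
\end{equation*}
The natural move is to take the inner product of both sides with the null vector $s$ of $H_m$. Since $H_m s = 0$ implies $s^\ast H_m^\ast = 0$, the first term on the left and the entire right-hand side vanish, leaving
\begin{equation*}
|h_{m+1,m}|^2 \, (s^\ast e_m)(e_m^\ast u) = 0 .
\end{equation*}
Because GMRES at iteration $m$ has not broken down, $h_{m+1,m}\neq 0$, and we already have $s^\ast e_m \neq 0$; hence $e_m^\ast u = 0$, which is the claim.

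The only subtlety I anticipate is the same one that appears throughout this section: one must be sure that $h_{m+1,m}\neq 0$ and that $r_{m-1}\neq 0$ so that Theorem \ref{thm4} genuinely applies and delivers a singular $H_m$ with an unreduced structure. Both are standard non-degeneracy conditions for GMRES prior to convergence, and the second is literally the standing hypothesis of Theorem \ref{thm4}. Given those, the argument is a two-line computation: singularity of $H_m$ (from $e_m^\ast y = 0$ via Theorem \ref{thm4}), left-multiplication of (\ref{harh}) by the null vector $s^\ast$, and cancellation. The main "obstacle" is thus only bookkeeping — making sure the hypotheses of the cited Lemma \ref{thm3} and Theorem \ref{thm4} are in force — rather than any genuine difficulty.
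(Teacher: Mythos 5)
Your proposal is correct, but it takes a genuinely different and substantially shorter route than the paper. The paper's proof of Theorem~\ref{yu} partitions $H_m$ into the block form (\ref{hm}), uses singularity to write the last column as $h=H_{m-1}s_1$, and then compares the block components of (\ref{harh}), which after an inner product with $s_1$ yields equation (\ref{nzero}) and hence $u_m=0$. You instead take a right null vector $s$ of the singular $H_m$ (singularity being supplied, exactly as in the paper, by Lemma~\ref{thm3} and Theorem~\ref{thm4}), note $e_m^\ast s\neq 0$ because $H_m$ is unreduced upper Hessenberg, and left-multiply (\ref{harh}) by $s^\ast$: since $s^\ast H_m^\ast=(H_ms)^\ast=0$, everything collapses to $|h_{m+1,m}|^2(s^\ast e_m)(e_m^\ast u)=0$, giving $e_m^\ast u=0$. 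This is precisely the trick the paper itself uses in the converse direction of Theorem~\ref{thm4}, applied now to the harmonic equation, and it is cleaner and works verbatim for $\sigma=0$ (a point the paper has to remark on separately). What the paper's longer block computation buys is the intermediate relation (\ref{m1h}), which it reuses later to prove Lemma~\ref{har2success} on the preservation of harmonic Ritz pairs during stagnation; your argument does not produce that byproduct, but as a proof of Theorem~\ref{yu} alone it is complete, provided (as you correctly flag) one keeps the standing non-degeneracy assumptions $r_{m-1}\neq 0$ and $h_{m+1,m}\neq 0$ in force.
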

\begin{proof}
Let $e_m^\ast  y=0.$ The proof for $e_m^\ast u=0 $ is required. As $e_m^\ast y=0$ from the Lemma-\ref{thm3} and the Theorem-\ref{thm4} note that $H_m$ is a singular matrix. Assume that $H_m$ is of the following form: 
\begin{equation}\label{hm}\relax
H_m := \begin{bmatrix}
H_{m-1}&h\\
\gamma e_{m-1}^\ast &\alpha \gamma
\end{bmatrix},
\end{equation}
where $H_{m-1}$ is a  principal submatrix of order $m-1$ from the top left corner of $H_m.$ The singularity of a matrix $H_m$ implies the existence of a vector $s_1$ such that 
$$h= H_{m-1}s_1.$$ As $(\sigma,u)$ is a harmonic Ritz pair, it satisfies the equation (\ref{harh}). By using the form of a matrix $H_m$ in the above equation,  (\ref{harh}) can be written as follows:
\begin{eqnarray}\label{hsh}\relax
\begin{bmatrix}
H_{m-1}^\ast H_{m-1}+|\gamma|^2e_{m-1}e_{m-1}^\ast & H_{m-1}^\ast h+\alpha|\gamma|^2e_{m-1}\\
h^\ast H_{m-1}+\bar{\alpha}|\gamma|^2e_{m-1}^\ast&h^\ast h+|\alpha\gamma|^2
\end{bmatrix}u = \nonumber \\ \sigma \begin{bmatrix}
H_{m-1}^\ast&\bar{\gamma} e_{m-1}\\h^\ast&\bar{\alpha}\bar{\gamma}
\end{bmatrix} u-|h_{m+1,m}|^2e_me_m^\ast u.
\end{eqnarray}
Assume that $u_{1:m-1}$ represents a vector whose entries are the same as first $m-1$ elements of a vector $u,$ and $u_m$ denotes a last entry of the vector $u.$ Following this notation, the comparison of both the sides of the above equation gives the following relations:
\begin{eqnarray}\label{m1h}\relax
H_{m-1}^\ast H_{m-1}u_{1:m-1}-\sigma H_{m-1}^\ast u_{1:m-1}+|\gamma|^2e_{m-1}(e_{m-1}^\ast u_{1:m-1}) \nonumber \\=u_m(\sigma \bar{\gamma} e_{m-1}- H_{m-1}^\ast h-\alpha|\gamma|^2e_{m-1}),
\end{eqnarray}
and
\begin{eqnarray*}
h^\ast H_{m-1}u_{1:m-1}+\bar{\alpha}|\gamma|^2e_{m-1}^\ast u_{1:m-1}+(h^\ast h+|\alpha\gamma|^2)u_m= \nonumber \\ \sigma h^\ast u_{1:m-1}+\sigma \bar{\alpha}\bar{\gamma}u_m-|h_{m+1,m}|^2 u_m .
\end{eqnarray*}
On substituting $h= H_{m-1}s_1,$ this implies
$$s_1^\ast (H_{m-1}^\ast H_{m-1}-\sigma H_{m-1}^\ast)u_{1:m-1}+\bar{\alpha}|\gamma|^2e_{m-1}^\ast u_{1:m-1}+(h^\ast h+|\alpha\gamma|^2-\sigma\bar{\alpha}\bar{\gamma}+|h_{m+1,m}|^2)u_m=0. $$
As $H_m$ is an unreduced upper Hessenberg singular matrix, from the equations (\ref{hm}) and $h= H_{m-1}s_1,$ note that $\alpha$ and $\gamma$ are non-zero, and $\bar{\alpha} = s_1^\ast e_{m-1}.$ Further, apply an inner product on both the sides of the equation (\ref{m1h}) with a vector $s_1.$ Then, substituting it in the above equation gives
\begin{equation}\label{nzero}\relax
u_m(\sigma \bar{\gamma} s_1^\ast e_{m-1}-h^\ast h-|\alpha\gamma|^2+h^\ast h+|\alpha\gamma|^2-\sigma\bar{\alpha}\bar{\gamma}+|h_{m+1,m}|^2)=0.
\end{equation}
The above equation has used the relation $h= H_{m-1}s_1$ to obtain the second term inside the parentheses. Using $\bar{\alpha} = s_1^\ast e_{m-1}$ and $h_{m+1,m} \neq 0$ observe that the term inside the parentheses of the  equation (\ref{nzero}) is non-zero. Therefore, $u_m := e_m^\ast u = 0.$ Hence,  the theorem proved.
\end{proof}
Observe that in the Theorem- \ref{yu} $\sigma \neq 0$ is not necessary as the equation (\ref{nzero}) holds true for $\sigma = 0 $ as well. Next, the following theorem proves the converse of the Theorem-\ref{yu}.
\begin{thm}\label{uy}\relax
Let vectors $u$ and $y$ be the same as in the Theorem-\ref{yu}. If $e_m^\ast  u=0$ then $e_m^\ast y=0.$
\end{thm}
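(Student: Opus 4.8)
The plan is to show that the hypothesis $e_m^\ast u=0$ forces the Hessenberg matrix $H_m$ to be singular; once that is in hand, Theorem~\ref{thm4} (under the standing assumption $r_{m-1}\neq 0$ used there) immediately delivers $e_m^\ast y=0$, which is exactly the claim. In other words, I want to reduce this converse to a statement purely about $H_m$ and then quote the characterization of stagnation already proved.

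First I would substitute $e_m^\ast u=0$ into the harmonic Ritz equation~(\ref{harh}): the middle term $|h_{m+1,m}|^2 e_m e_m^\ast u$ vanishes and what remains is $H_m^\ast H_m u=\sigma H_m^\ast u$, i.e. $H_m^\ast w=0$ with $w:=H_m u-\sigma u$. Now split into cases. If $w\neq 0$, then $H_m^\ast$ has a nontrivial kernel, so $H_m^\ast$ and hence $H_m$ is singular, and we are done. If $w=0$, then $H_m u=\sigma u$ with $u\neq 0$ (a harmonic Ritz vector is nonzero by Definition~\ref{defn1}); when $\sigma=0$ this already exhibits $H_m$ as singular, so the only remaining possibility to rule out is that $u$ is an eigenvector of $H_m$ for an eigenvalue $\sigma\neq 0$ whose last entry $e_m^\ast u$ is zero.

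The one point requiring a short argument — and the only real obstacle — is eliminating that last possibility: an unreduced upper Hessenberg matrix cannot have an eigenvector with vanishing last component. Reading off the last row of $H_m u=\sigma u$ gives $h_{m,m-1}u_{m-1}=\sigma u_m=0$, so $u_{m-1}=0$ since $h_{m,m-1}\neq 0$; substituting this back into the $(m-1)$-st row and then proceeding upward, using at each step that the subdiagonal entry $h_{j+1,j}$ is nonzero, forces $u_{m-2}=\cdots=u_1=0$, contradicting $u\neq 0$. Hence the case $w=0,\ \sigma\neq 0$ cannot occur, so in every admissible case $H_m$ is singular. Applying Theorem~\ref{thm4} then gives $e_m^\ast y=0$, completing the proof. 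This parallels the converse direction in the proof of Theorem~\ref{thm4}, where a null vector of $H_m$ led to the same conclusion, and — as already remarked after Theorem~\ref{yu} — the argument is insensitive to whether $\sigma=0$.
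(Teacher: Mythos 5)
Your proposal is correct and follows essentially the same route as the paper: substitute $e_m^\ast u=0$ into (\ref{harh}) to get $H_m^\ast(H_mu-\sigma u)=0$, rule out $H_mu=\sigma u$ because an unreduced upper Hessenberg matrix admits no nonzero (generalized sense: eigen-)vector with vanishing last component, conclude $H_m$ is singular, and invoke Theorem~\ref{thm4}. The only difference is cosmetic: the paper cites Lemma~2.1 of \cite{rbl} for the eigenvector fact, whereas you prove it inline by back-substitution along the nonzero subdiagonal (an argument that also covers your separate $\sigma=0$ sub-case, making that split unnecessary).
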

\begin{proof}
As $(\sigma,u)$ is a harmonic Ritz pair, it satisfies the equation (\ref{harh}). Further, using $e_m^\ast  u=0$ it gives $H_m^\ast H_m u = \sigma H_m^\ast u.$ This implies either $H_mu-\sigma u$ is a zero vector or $H_m$ is a singular matrix. Assume that $H_mu-\sigma u$ is a zero vector. Then, as $H_m$ is an unreduced upper Hessenberg matrix and $e_m^\ast u=0,$ by using the Lemma-2.1 in \cite{rbl}, the equation $H_mu=\sigma u$ implies $u$ is a zero vector, a contradiction to the statement that $u$ is a harmonic Ritz vector. Therefore, $H_m $ is a singular matrix. Now, by using the Theorem-\ref{thm4}, this gives $e_m^\ast y=0.$ Hence, the proof is over.
\end{proof}
The Theorems-\ref{thm3}, \ref{yu}, and \ref{uy} have shown that the stagnation occurs at $m^{th}$ iteration of GMRES if and only if $e_m^\ast u=0$ and $e_m^\ast y=0.$ That means, when the stagnation occurs, the necessary and sufficient condition in the Theorems-\ref{nece} and \ref{suffnostag} for the coincidence of a harmonic residual vector and the residual vector in GMRES is trivial. The following is the proof for the Theorem-\ref{suffstag} of the previous section.
\begin{proof}[\textbf{Proof of Theorem-\ref{suffstag}}]
As $(\sigma,u)$ is a harmonic Ritz pair, and $b-AV_my$ is a residual at $m^{th}$ iteration, the vectors $u$ and $y$ satisfy the equations (\ref{harh}) and (\ref{res}) respectively. Since there is a stagnation at $m^{th}$ iteration of GMRES, the Theorems-\ref{thm3}, \ref{yu}, and \ref{uy} imply $e_m^\ast u=0$ and $e_m^\ast y=0.$ Thus, $H_m^\ast H_mu= \sigma H_m^\ast u,$ and $H_m^\ast H_my = \beta H_m^\ast e_1.$  These two equations together imply $H_m^\ast(H_mu- \sigma H_m^\ast u+H_my - \beta H_m^\ast e_1)=0.$ This implies $$H_mu- \sigma u= \beta e_1-H_my +\xi s_2.$$ Here, $\xi$ is a scalar, and $s_2$ is a vector such that $H_m^\ast s_2=0.$ Note that a vector $s_2$ exists due to the Theorem-\ref{thm4}, and the stagnation of GMRES. On multiplying both the sides of the above equation with a matrix $V_m$ gives $V_mH_mu-\sigma V_m u=\beta V_m e_1-V_mH_m y+\xi V_m s_2.$
Now, by using $e_m^\ast u=0,$ $e_m^\ast y=0,$ and the equation  (\ref{eq3}) for $i=m,$ this equation can be written as follows:
$$AV_mu-\sigma V_m u=\beta V_m e_1-AV_my+\xi V_m s_2 $$
As $ \beta V_m e_1 = b,$ this gives $AV_mu-\sigma V_mu=b-AV_my+\xi V_m s_2,$ the required equation. Hence, the proof is over.
\end{proof}
In the following, we prove the theorems those relate harmonic Ritz vectors at any two successive iterations of GMRES in the presence of the stagnation.
\begin{lem}\label{har2success}\relax
Assume that the stagnation has occurred at the $m^{th}$ iteration of GMRES. Let $u$ be a harmonic Ritz vector corresponding to the non-zero harmonic Ritz value $\sigma,$  at $m^{th}$ iteration. Then $(\sigma,u_{1:m-1})$ is a harmonic Ritz pair at $(m-1)^{th}$ iteration of GMRES. 
\end{lem}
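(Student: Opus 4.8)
The plan is to push the harmonic eigenvalue relation at iteration $m$ down to iteration $m-1$, using the fact that stagnation pins the last component of the harmonic Ritz vector to zero. First I would record the structural consequences of stagnation at iteration $m$: by Lemma~\ref{thm3}, Theorem~\ref{yu}, and Theorem~\ref{uy} the stagnation forces $e_m^\ast u = 0$ (and $e_m^\ast y = 0$), and by Theorem~\ref{thm4} (via Lemma~\ref{thm3}) it makes $H_m$ a singular unreduced upper Hessenberg matrix. Since $(\sigma,u)$ is a harmonic Ritz pair at iteration $m$, it satisfies~(\ref{harh}); substituting $e_m^\ast u = 0$ collapses that identity to
$$H_m^\ast H_m u = \sigma H_m^\ast u .$$

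Next I would write $H_m$ in the block form~(\ref{hm}) and, because $e_m^\ast u = 0$, write $u = \begin{pmatrix} u_{1:m-1} \\ 0 \end{pmatrix}$. Expanding $H_m^\ast H_m u = \sigma H_m^\ast u$ block-wise and reading off the top $m-1$ rows yields exactly
$$H_{m-1}^\ast H_{m-1} u_{1:m-1} + |\gamma|^2 e_{m-1} e_{m-1}^\ast u_{1:m-1} = \sigma H_{m-1}^\ast u_{1:m-1} .$$
Since $\gamma = h_{m,m-1}$ is the subdiagonal entry of $H_m$ in position $(m,m-1)$ and $H_{m-1}$ is the leading $(m-1)\times(m-1)$ principal submatrix of $H_m$, i.e.\ the Hessenberg matrix produced by GMRES at iteration $m-1$, this is precisely the $H_{m-1}$-form of the harmonic eigenvalue relation (the analogue of~(\ref{harh}) at iteration $m-1$) for the pair $(\sigma, u_{1:m-1})$. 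Finally, $u_{1:m-1} \neq 0$, for otherwise $u = 0$, contradicting that $u$ is a harmonic Ritz vector; hence $(\sigma, u_{1:m-1})$ is a harmonic Ritz pair at iteration $m-1$.

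The main point that needs care is verifying that the bottom row of the block expansion does not impose an extra constraint that $u_{1:m-1}$ might fail to meet. Using the singularity of $H_m$ as in the proof of Theorem~\ref{yu} — writing $h = H_{m-1} s_1$ with $s_1^\ast e_{m-1} = \bar\alpha$ — the bottom row reads $h^\ast H_{m-1} u_{1:m-1} + \bar\alpha |\gamma|^2 e_{m-1}^\ast u_{1:m-1} = \sigma h^\ast u_{1:m-1}$, and applying $s_1^\ast$ to the top-row identity above shows this holds automatically. So there is no genuine obstruction: the bookkeeping of the block multiplication is the only delicate part. I would also note in passing that the hypothesis $\sigma \neq 0$ is actually automatic under stagnation — if $\sigma = 0$ then $H_m^\ast H_m u = 0$ forces $H_m u = 0$, putting $u$ in the null space of the unreduced Hessenberg matrix $H_m$, whose null vectors have nonzero last entry, contradicting $e_m^\ast u = 0$ — so assuming it costs nothing.
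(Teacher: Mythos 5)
Your proposal is correct and follows essentially the same route as the paper: stagnation gives $e_m^\ast u = 0$ (via Lemma~\ref{thm3} and Theorems~\ref{thm4}, \ref{yu}), and setting $u_m = 0$ in the top block of the block-expanded relation~(\ref{harh}) (the paper's equation~(\ref{m1h})) yields exactly the harmonic Ritz relation for $(\sigma, u_{1:m-1})$ at iteration $m-1$. Your extra checks (that the bottom row holds automatically and that $u_{1:m-1}\neq 0$) are harmless additions — the bottom row needs no verification since $u$ already satisfies the full equation~(\ref{harh}) — and your observation that $\sigma \neq 0$ is automatic is a correct side remark not made in the paper.
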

\begin{proof}
Due to the stagnation at $m^{th}$ iteration of GMRES the Lemma-\ref{thm3} and the Theorem-\ref{yu} give $u_m:=e_m^\ast u=0.$ Substituting this in the equation (\ref{m1h}) gives the desired result, that means $(\sigma,u_{1:m-1})$ is a harmonic Ritz pair at $(m-1)^{th}$ iteration.
\end{proof}
Next, in the following, we prove the converse of the Lemma-\ref{har2success}.
\begin{lem}
Let $u$ be a harmonic Ritz vector corresponding to the non-zero harmonic Ritz value $\sigma,$  at $m^{th}$ iteration. If $(\sigma,u_{1:m-1})$ is a harmonic Ritz pair at $(m-1)^{th}$ iteration of GMRES then there is a stagnation at $m^{th}$ iteration of GMRES.
\end{lem}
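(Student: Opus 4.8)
The plan is to reduce the statement to the single fact $e_m^\ast u=0$ and then quote the results already available. Indeed, Lemma~\ref{thm3} together with Theorems~\ref{yu} and \ref{uy} shows that stagnation at the $m^{th}$ iteration is equivalent to $e_m^\ast u=0$ (equivalently $e_m^\ast y=0$). So it suffices to prove $u_m:=e_m^\ast u=0$; once this is in hand, Theorem~\ref{uy} gives $e_m^\ast y=0$, whence $K\,e_m^\ast y=0$ and Theorem~\ref{thm2} yields $\|r_{m-1}\|=\|r_m\|$, the stagnation at the $m^{th}$ iteration.

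To obtain $u_m=0$, I would use the block form (\ref{hm}) of $H_m$. Writing the harmonic relation (\ref{harh}) for $u$ in block form, its top $m-1$ rows are exactly the identity (\ref{m1h}), which holds with no extra hypothesis. The assumption that $(\sigma,u_{1:m-1})$ is a harmonic Ritz pair at iteration $m-1$ is precisely the statement that the left-hand side of (\ref{m1h}) vanishes. Hence $u_m\big[(\sigma\bar\gamma-\alpha|\gamma|^2)e_{m-1}-H_{m-1}^\ast h\big]=0$, and it remains to show that the vector $w:=(\sigma\bar\gamma-\alpha|\gamma|^2)e_{m-1}-H_{m-1}^\ast h$ is nonzero.

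The main obstacle is precisely this non-vanishing of $w$. I would argue by contradiction: assuming $w=0$, i.e.\ $H_{m-1}^\ast h=(\sigma\bar\gamma-\alpha|\gamma|^2)e_{m-1}$, substitute this relation into the bottom row of the block form of (\ref{harh}) (the second displayed equation in the proof of Theorem~\ref{yu}); then use the harmonic relation at iteration $m-1$ once more to rewrite $H_{m-1}u_{1:m-1}-\sigma u_{1:m-1}$, leaving a scalar identity whose coefficient of $u_m$ must be shown to be nonzero using $\sigma\neq 0$ and $h_{m+1,m}\neq 0$ (the unreducedness of $H_m$) — in the spirit of the non-vanishing-of-the-parenthesis step leading to (\ref{nzero}). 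I expect this to need a short case split on the singularity of $H_{m-1}$: if $H_{m-1}$ is nonsingular one can solve for $h$ explicitly in terms of $(H_{m-1}^\ast)^{-1}e_{m-1}$ and evaluate $\det H_m$ by a Schur complement, while if $H_{m-1}$ is singular one invokes Theorem~\ref{thm4} and the unreduced-Hessenberg eigenvector lemma of \cite{rbl}. A possibly cleaner finish in the nonsingular case is to note that the vector obtained from $u_{1:m-1}$ by appending a zero is a harmonic Ritz vector at iteration $m$ exactly when $H_m$ is singular, and to play this against the hypothesis that $u$ itself is a harmonic Ritz vector at iteration $m$ for $\sigma$. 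With $u_m=0$ established, the conclusion follows from Theorems~\ref{uy} and \ref{thm2} as described above.
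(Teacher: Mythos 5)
Your route is the same as the paper's: the hypothesis makes the left-hand side of (\ref{m1h}) vanish, leaving $u_m\,w=0$ with $w:=(\sigma\bar\gamma-\alpha|\gamma|^2)e_{m-1}-H_{m-1}^\ast h$, and once $u_m=0$ is in hand, Theorem~\ref{uy} and Theorem~\ref{thm2} give the stagnation, exactly as in the paper's own proof. Where you differ is that you explicitly flag that concluding $u_m=0$ requires $w\neq 0$; the paper passes over this silently, asserting $e_m^\ast u=0$ directly from the hypothesis and (\ref{m1h}). Your worry is exactly right, but your handling of it is only a plan (``I would argue by contradiction\dots I expect this to need a short case split''), so the proposal is incomplete at precisely the step you yourself identify as the main obstacle.

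Moreover, that step cannot be carried out: $w\neq0$ does not follow from the hypotheses, and when $w=0$ the conclusion of the lemma can actually fail. Take $m=2$ with Arnoldi data $V_3=I$, $b=e_1$, $Ae_1=(1,1,0)^T$, $Ae_2=(-1,3,2)^T$ (third column of $A$ arbitrary), so that $H_2=\left[\begin{smallmatrix}1&-1\\1&3\end{smallmatrix}\right]$ and $h_{32}=2$. Then $\sigma=2$ is a harmonic Ritz value at iteration $1$ and also at iteration $2$, where the pencil $\bigl(V_2^\ast A^\ast AV_2,\,V_2^\ast A^\ast V_2\bigr)=\Bigl(\left[\begin{smallmatrix}2&2\\2&14\end{smallmatrix}\right],\left[\begin{smallmatrix}1&1\\-1&3\end{smallmatrix}\right]\Bigr)$ has eigenvector $u=(-2,1)^T$ for $\sigma=2$; hence $u_{1:1}=-2\neq0$ is a harmonic Ritz vector at iteration $1$ for the same nonzero $\sigma$, so all hypotheses of the lemma hold, and indeed $w=\sigma\bar\gamma-\alpha|\gamma|^2-\bar h_{11}h_{12}=2-3+1=0$ while $e_2^\ast u=1\neq0$. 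Yet $H_2$ is nonsingular, the GMRES solution is $y=(2/3,\,-1/6)^T$ so $e_2^\ast y\neq0$, and $\|r_1\|^2=1/2>1/6=\|r_2\|^2$: there is no stagnation. So no contradiction or case-split argument of the kind you sketch can establish $w\neq0$; the implication needs an additional hypothesis excluding $w=0$, and the same counterexample shows that the paper's one-line deduction of $e_m^\ast u=0$ in its proof of this lemma is itself unjustified.
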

\begin{proof}
From the hypothesis of the lemma and the equation (\ref{m1h}) we have $e_m^\ast u=0.$ By using the Theorem-\ref{uy}, this gives $e_m^\ast y=0.$ Now, use the Theorem-\ref{thm2} to conclude $r_{m-1}=r_m,$ where $r_i$ is a residual at the $i^{th}$ iteration. Therefore, there is a stagnation at $m^{th}$ iteration. Hence, the proof is over.
\end{proof}
\section{Conclusions}
This paper shows that coincidence of the GMRES residual vector and Harmonic residual vector is theoretically possible, and derives the necessary and sufficient condition for this coincidence. Then, for the stagnation in GMRES, it derives necessary and sufficient conditions those based on elements of a harmonic Ritz vector. Further, it shows that in case of the stagnation, the harmonic Ritz vectors corresponding to non-zero harmonic Ritz values are preserved. The procedure followed in this paper for proving these results will be helpful for the study of the near stagnation of GMRES in terms of elements of harmonic Ritz vectors.
\section*{Acknowledgements}
The author thanks the National Board of Higher Mathematics, India
for supporting this work under the Grant number 02/40(3)/2016. 
\section*{References}
%

\begin{thebibliography}{99}
\bibitem{gen} K. Du, Y. Huang, and Y. Wang, \textit{On two generalized inverse eigenvalue problems for Hessenberg-upper triangular pencils and their
application to the study of GMRES convergence}, Linear Algebra Appl., 553 (2018), pp. 16-36
\bibitem{goo} S. Goossens, D. Roose, \textit{Ritz and harmonic Ritz values and the convergence of FOM and GMRES,} Numer. Linear Algebra Appl. 6 (1999), pp. 281-293.
\bibitem{rbl}Lehoucq, R.B., {\it Analysis and Implementation of an Implicitly Restarted Arnoldi Iteration}, Ph.d Thesis, Rice University, Houston, Texas, May 1995.
\bibitem{saad} Y. Saad, M.H. Schultz, \textit{GMRES: a generalized minimal residual algorithm for solving nonsymmetric linear systems}, SIAM J. Sci. Stat. Comput., 7 (1986), pp. 856 - 869.
\end{thebibliography}

\end{document}